\newcommand{\B}[1]{{\mathbb #1}}
\newcommand{\C}[1]{{\mathcal #1}}
\newtheorem{theorem}[subsection]{Theorem}%[section]
\newtheorem{corollary}[subsection]{Corollary}
\newtheorem{lemma}[subsection]{Lemma}
\newtheorem{proposition}[subsection]{Proposition}
\theoremstyle{definition}
\newtheorem{example}[subsection]{Example}
\theoremstyle{remark}
\newtheorem{remark}[subsection]{Remark}
\numberwithin{figure}{section}
\numberwithin{table}{section}
\newcommand{\al}{{\alpha}}
\newcommand{\Om}{{\Omega}}
\newcommand{\si}{{\sigma}}
\newcommand{\Mo}{(M,\omega )}
\newcommand{\cp}{\B C\B P}
\newcommand\BH{\operatorname{BHam}}
\newcommand\Ham{\operatorname{Ham}}
\begin{document}

\title[Hamiltonian characteristic classes]
{On the algebraic independence of Hamiltonian characteristic classes}
\author{\'Swiatos\l aw Gal}
\address{\'SG: University of Wroc\l aw}
\email{Swiatoslaw.Gal@math.uni.wroc.pl}
\author{Jarek K\k edra}
\address{JK: University of Aberdeen, University of Szczecin}
\email{kedra@abdn.ac.uk}
\author{Aleksy Tralle}
\address{AT: University of Warmia and Mazury in Olsztyn}
\email{tralle@matman.uwm.edu.pl}
%\thanks{thanks} 
%\keywords{keywords}
%\subjclass[2000]{Primary 53; Secondary 57}
%
%
\begin{abstract}
We prove that Hamiltonian characteristic classes
defined as fibre integrals of powers of the coupling class
are algebraically independent for generic coadjoint orbits.
\end{abstract}

\maketitle

\section{Introduction}
Let $\Mo$ be a closed symplectic manifold of dimension $2n$ and let
$$
\Mo \stackrel{i}\to E \stackrel{\pi}\to B
$$ 
be a Hamiltonian fibration over a simply connected base.  It means
that the structure group of the fibration is contained in the group
$\Ham\Mo$ of Hamiltonian diffeomorphisms of $\Mo$.  There exist a
cohomology class $\Om \in H^2(E)$ that is uniquely defined by the
following two conditions:
\begin{itemize}
\item[]
$i^*\Om = [\omega],$
\item[]
$\pi_!(\Om^{n+1})= 0.$
\end{itemize}
It is called the {\em coupling class}. The existence of the coupling 
class and its basic properties are discussed in Guillemin-Lerman-Sternberg
\cite{MR98d:58074} and Lalonde-McDuff \cite{MR1941438}.

Since the fibre integration is functorial the coupling class is
natural in the sense that the coupling class of the pull--back bundle
is the pull--back of the coupling class. One defines the following
characteristic classes of Hamiltonian fibration
$$
\mu_k(E) := \pi_!(\Om^{n+k}) \in H^{2k}(B).
$$
The fundamental question arises if these classes are nontrivial and
to what extent are they algebraically independent in the cohomology
ring $H^*(B\Ham\Mo)$ of the classifying space of the group of Hamiltonian
diffeomorphisms.

The first result about the algebraic independence was proved
by Reznikov for $\cp^n$ (Section 1.3 in \cite{MR2000f:53116})
and it states that the classes $\mu_k$ are algebraically
independent for $k=2,\ldots,n+1.$ Reznikov also suggested
that this result may be true for any coadjoint orbit
of a compact Lie group. The main result of the present note
states that Reznikov's claim is correct {\em generically}.

\begin{theorem}\label{T:main}
Let $G$ be a compact semisimple Lie group and let 
$$
\C K:=\{k\in \B N\,|\, \pi_{2k}(BG)\otimes \B Q \neq 0\}.
$$ There exists a nonempty Zariski open subset 
$A\subset \mathfrak g^{\vee}$ in the dual of the Lie algebra of 
$G$ such that for any $\xi \in A$ the coadjoint orbit $M_{\xi}$ of
$\xi$ satisfies the following.  The classes $\mu_k\in
H^{2k}(B\Ham(M_{\xi}))$ are algebraically independent for $k\in \C K.$
\end{theorem}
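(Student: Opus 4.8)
Since $G$ is compact, it acts on every coadjoint orbit $M_\xi\subset\F g^\vee$ preserving the Kirillov--Kostant--Souriau symplectic form, and the action is Hamiltonian with moment map the inclusion $M_\xi\hookrightarrow\F g^\vee$. This produces a homomorphism $G\to\Ham(M_\xi)$, hence a map $BG\to\BH(M_\xi)$, whose pull-back of the universal $\Ham(M_\xi)$-bundle is $EG\times_G M_\xi\to BG$. By naturality of the coupling class and of fibre integration, the classes $\mu_k\in H^{2k}(\BH(M_\xi))$ restrict to the corresponding classes of this bundle in $H^{2k}(BG;\B Q)$, so it suffices to prove that the latter are algebraically independent. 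For $\xi$ regular --- a Zariski open condition --- the stabiliser is a maximal torus $T$, so $M_\xi\cong G/T$ and $EG\times_G(G/T)=BT$; the bundle in question is the universal flag bundle $G/T\to BT\stackrel{\pi}{\to}BG$. Because $G$ is semisimple, $H^2(BG;\B Q)=(\F t^\vee)^W=0$, so $H^2(BT;\B Q)\to H^2(G/T;\B Q)$ is an isomorphism; thus $[\omega_\xi]$ lifts uniquely to $H^2(BT;\B Q)$, and by uniqueness of the coupling class that lift is $\Om$. Normalising $\xi$ to be dominant and using $H^2(BT;\B Q)\cong\F t^\vee\cong H^2(G/T;\B Q)$ with $[\omega_\xi]=\xi$, we get $\Om=\xi$.

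Next I would compute $\mu_k(\xi)=\pi_!(\Om^{n+k})$ by Atiyah--Bott localisation on the flag bundle (equivalently, the Weyl character formula): for $f\in\operatorname{Sym}(\F t^\vee)=H^*(BT;\B Q)$ the fibre integration is the total $W$-symmetrisation of $f/\de$, where $\de:=\prod_{\alpha>0}\alpha$, so (up to an overall sign)
$$\mu_k(\xi)=\frac{1}{\de}\sum_{w\in W}(-1)^{\ell(w)}(w\!\cdot\!\xi)^{n+k}\ \in\ H^{2k}(BG;\B Q)=\operatorname{Sym}^k(\F t^\vee)^W.$$
Equivalently, for $\eta\in\F t$ the value $\mu_k(\xi)(\eta)$ is, up to a positive constant, the $k$-th moment of the Duistermaat--Heckman measure of $M_\xi$ in the direction $\eta$. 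The right-hand side is a polynomial in $\xi$, homogeneous of degree $n+k$ and $W$-anti-invariant in $\xi$; hence it is divisible by $\de(\xi)$ and $\mu_k(\xi)=\de(\xi)\,\nu_k(\xi)$ with $\nu_k(\xi)\in H^{2k}(BG;\B Q)$ depending on $\xi$ through a $W$-invariant polynomial of degree $k$.

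Write $H^*(BG;\B Q)=\B Q[p_1,\dots,p_r]$ with $\deg p_i=d_i$, so that $\C K=\{d_1,\dots,d_r\}$ as a set. By the Jacobian criterion in characteristic zero, $\{\mu_k(\xi):k\in\C K\}$ are algebraically independent precisely when some $|\C K|\times|\C K|$ minor $D(\xi)$ of the matrix $\bigl(\partial\mu_k(\xi)/\partial p_j\bigr)_{k\in\C K,\,1\le j\le r}$ is a nonzero element of $\B Q[p_1,\dots,p_r]$. Since $\de\not\equiv 0$ and the condition $D(\xi)\neq 0$ is Zariski open in $\xi$, it is enough to exhibit a single $\xi$ with $D(\xi)\neq 0$; one may then take $A\subset\F g^\vee$ to be the $G$-saturation of $\{\xi\in\F t^\vee:\de(\xi)\,D(\xi)\neq 0\}$, which is Zariski open because these conditions are $W$-invariant and descend along the adjoint quotient $\F g^\vee\to\F t^\vee/W$.

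The remaining --- and main --- step is to produce such a $\xi$. For $d\in\C K$ the class $\mu_d(\xi)$ lies in $\operatorname{Sym}^d(\F t^\vee)^W$, spanned by the fundamental invariant(s) of degree $d$ together with monomials in the $p_j$ of strictly smaller degree; hence $\mu_d(\xi)=\la_d(\xi)\,P_d+(\text{terms of lower }p\text{-degree})$ for a chosen fundamental invariant $P_d$ of degree $d$ and a $W$-invariant polynomial $\la_d(\xi)$ of degree $d$ (a scalar multiple of $\de(\xi)$ after the factorisation above). If each $\la_d$ is not identically zero, then for $\xi$ off the hypersurfaces $\{\la_d=0\}$ the classes $\mu_d(\xi)$, $d\in\C K$, are algebraically independent by triangularity with respect to the degree filtration, which gives the theorem. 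So everything reduces to the non-vanishing of the leading coefficients $\la_d$; in Duistermaat--Heckman language this says that for generic $\xi$ the degree-$d$ moment of $M_\xi$ is not a polynomial in its moments of smaller degree. I expect to verify this either by evaluating $\la_d$ at one convenient regular $\xi$ (for instance $\xi=\rho$, the half-sum of positive roots, using the Weyl alternating sum), or by a degeneration argument letting $\xi$ approach a wall and reading off the leading term of the alternating sum from a single chamber. This non-vanishing is the principal obstacle: the coefficients $\la_d$ genuinely depend on $\xi$ --- which is exactly why the conclusion is only generic --- and controlling the Weyl alternating sum $\sum_{w\in W}(-1)^{\ell(w)}(w\!\cdot\!\xi)^{n+d}$ in the large degree $n+d$ is the delicate part of the argument.
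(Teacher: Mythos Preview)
Your reduction is correct as far as it goes---restricting to regular $\xi$, identifying the coupling class with $\xi\in H^2(BT)$, and observing that algebraic independence follows once each $\mu_d(\xi)$ is indecomposable in $H^*(BG)$ (your ``triangularity'' is equivalent to this). But you have left the actual theorem unproved: the non-vanishing of your leading coefficients $\la_d(\xi)$ is the entire content, and you only say you ``expect to verify'' it. Neither of your proposed verifications (evaluating the Weyl alternating sum at $\xi=\rho$, or degenerating to a wall) is carried out, and extracting the top-degree invariant from $\sum_{w\in W}(-1)^{\ell(w)}(w\!\cdot\!\xi)^{n+d}$ explicitly is genuinely unpleasant.

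The paper bypasses this computation with a short topological argument, which is the idea you are missing. For each $k\in\C K$ pull back the flag bundle $G/T\to BT\to BG$ along a rationally nontrivial $f\colon S^{2k}\to BG$ to obtain $G/T\to E\to S^{2k}$. Since $H^*(BT)$ is generated in degree~$2$, so is $H^*(E)$; as $E$ is closed and oriented, the multilinear cup-product map $H^2(E)^{\otimes(n+k)}\to H^{2(n+k)}(E)\cong\B R$ is nonzero, and by polarisation the polynomial $a\mapsto a^{n+k}$ on $H^2(E)$ is not identically zero. Because $H^2(E)\cong H^2(G/T)$ parametrises the coupling classes, a Zariski-generic $\Om$ satisfies $\langle\Om^{n+k},[E]\rangle=\langle\mu_k,[S^{2k}]\rangle\neq 0$. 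A class pairing nontrivially with a sphere is indecomposable, and in the polynomial ring $H^*(BG)$ indecomposable elements of the distinct degrees $k\in\C K$ are automatically algebraically independent. Intersecting the finitely many Zariski-open conditions over $k\in\C K$ (and using that $\mu_k$ depends polynomially on $\xi$ to pass from regular to general $\xi$) yields the set $A$.
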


The proof of this theorem amounts to the calculation of the 
characteristic classes $\mu_k$ for the universal fibration
$M_{\xi}\to BG_{\xi}\to BG.$ Then we use the fact that the
cohomology ring $H^*(BG)$ is a polynomial ring generated by
elements in degrees in $\C K.$ 

\begin{corollary}\label{C:main}
Let $G$ be a compact simple Lie group different from $SO(4k)$ There
exists a nonempty Zariski open subset $A\subset \mathfrak g^{\vee}$ in
the dual of the Lie algebra of $G$ such that for any $\xi \in A$ the
coadjoint orbit $M_{\xi}$ of $\xi$ satisfies the following.  The
homomorphism $H^*(B\Ham(M_{\xi}))\to H^*(BG)$ induced by the action is
surjective and its image is generated by the classes $\mu_k$.
\end{corollary}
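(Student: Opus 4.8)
The corollary is a formal consequence of (the proof of) Theorem~\ref{T:main}. Let $G$ be compact simple, not isomorphic to any $SO(4k)$, of rank $r$, and write $\rho\colon BG\to B\Ham(M_\xi)$ for the map induced by the coadjoint action $G\to\Ham(M_\xi)$; the homomorphism in the statement is $\rho^*\colon H^*(B\Ham(M_\xi))\to H^*(BG)$. By naturality of the coupling class (functoriality of fibre integration), $\rho$ pulls back the universal Hamiltonian fibration over $B\Ham(M_\xi)$ to $M_\xi\to BG_\xi\to BG$, so $\rho^*\mu_k$ is the fibre-integral class $\mu_k(BG_\xi\to BG)$ computed in the proof of Theorem~\ref{T:main}. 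The plan is to show that already these pulled-back classes generate all of $H^*(BG;\B Q)$.

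First I would recall that $H^*(BG;\B Q)$ is a polynomial algebra $\B Q[y_1,\dots,y_r]$ with $\deg y_i=2d_i$, where $d_1<d_2<\dots<d_r$ are the fundamental degrees of $G$; because $G$ is simple and not an $SO(4k)$, these degrees are pairwise distinct, so $\C K=\{d_1,\dots,d_r\}$ and $|\C K|=r$. I take for $A$ the Zariski open set furnished by Theorem~\ref{T:main}; by its proof, for $\xi\in A$ the classes $\rho^*\mu_{d_1},\dots,\rho^*\mu_{d_r}\in H^*(BG;\B Q)$ are algebraically independent.

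Next comes the degree bookkeeping. Since $\rho^*$ preserves degree, $\rho^*\mu_{d_i}\in H^{2d_i}(BG;\B Q)$. Every homogeneous element of degree $2d_i$ is a $\B Q$-linear combination of $y_i$ and of monomials in $y_1,\dots,y_{i-1}$ only: a factor $y_j$ with $j>i$ is too heavy, and $y_i^2$ is too heavy. Hence $\rho^*\mu_{d_i}=c_i\,y_i+Q_i(y_1,\dots,y_{i-1})$ for some $c_i\in\B Q$ and homogeneous $Q_i$. I claim each $c_i\neq 0$: if $c_j=0$ then $\rho^*\mu_{d_1},\dots,\rho^*\mu_{d_j}$ all lie in $\B Q[y_1,\dots,y_{j-1}]$, a ring of transcendence degree $j-1<j$, contradicting their algebraic independence. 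Given $c_i\neq 0$, induction on $i$ shows that $y_i=c_i^{-1}\bigl(\rho^*\mu_{d_i}-Q_i(y_1,\dots,y_{i-1})\bigr)$ lies in the subring generated by $\rho^*\mu_{d_1},\dots,\rho^*\mu_{d_i}$; therefore $H^*(BG;\B Q)=\B Q[\rho^*\mu_{d_1},\dots,\rho^*\mu_{d_r}]$. This yields simultaneously the surjectivity of $\rho^*$ and the statement that its image is generated by the classes $\mu_k$ (the remaining $\rho^*\mu_k$, $k\notin\C K$, then automatically lie in this polynomial algebra).

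I expect no serious obstacle once Theorem~\ref{T:main} is available: the only delicate point is the exclusion of the groups $SO(4k)$. It is precisely this hypothesis that forces all fundamental degrees to be distinct, so that the $r$ algebraically independent classes $\rho^*\mu_k$, $k\in\C K$, sit one in each of the $r$ generator degrees; this is what makes the degree count above work and upgrades ``algebraically independent'' to ``polynomial generators''. Without it one would have fewer classes than generators, and surjectivity would fail.
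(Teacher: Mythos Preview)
Your argument is correct and is exactly the formal derivation the paper leaves implicit: the corollary is stated without proof, and the subsequent remark explains only why $SO(4k)$ must be excluded (two generators of $H^*(BG)$ in the same degree). Your observation that for simple $G\neq SO(4k)$ the fundamental degrees $d_1<\cdots<d_r$ are pairwise distinct, together with the transcendence-degree argument forcing each $c_i\neq 0$ and the resulting triangular change of generators, is precisely what upgrades the algebraic independence of Theorem~\ref{T:main} to generation of $H^*(BG)$.
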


\begin{remark}
We exclude the orthogonal group $SO(4k)$ because there are two
generators in $H^{4k}(BSO(4k))$, the Euler class and the $k$-th
Pontryagin class. We shall show in Section \ref{SS:twistor} that the
Euler class is a nonzero multiple of the class $\mu_{2k}$ for a
bundle over $S^{4k}$ with fibre $SO(4k)/U(2k).$
\end{remark}

\begin{remark}
In order to prove the surjectivity of the homomorphism
$H^*(B\Ham\Mo)\to H^*(BG)$ one can use
characteristic classes defined by integrating products of the
equi\-variant Chern classes \cite{JK} or use different arguments
~\cite{MR2115670}.
\end{remark}

In Section \ref{S:examples}, we provide various examples of 
coadjoint orbits for which the algebraic independence 
does or does not hold. The simplest manifolds with 
the class $\mu_3$ trivial in $H^*(BG)$ are the complex grassmannian
$\operatorname{G}(2,4)$ of planes in $\B C^4$ and the flag
manifold $SU(3)/T$ for a certain invariant symplectic form.

\subsection{Conventions}
Throughout the paper $H^*(X)$ denotes the cohomology of $X$ with
real coefficients.

When we say that a {\em statement} holds for a {\em generic}
element $x\in V$ of an algebraic variety we mean that there
exists a Zariski open subset $S\subset V$ such that for every element
$x\in S$ the {\em statement} holds.

\subsection{Acknowledgements}
\'S.G. is partially supported by the {\sc mnisw} grant 
{\sc n n201 541738}.  J.K. would like to thank Ran Levi for useful
conversations which led to the proof of Lemma \ref{L:flag}.
A.T. would like to thank IHES and the Max-Planck-Institut for
hospitality during the work on this paper.

%%%%%%%%%%%%%%%%%%%%%%%%%%%%%%%%%%%%%%%%%%%%%%%%
\section{Proof of the main result}\label{S:proof}
%%%%%%%%%%%%%%%%%%%%%%%%%%%%%%%%%%%%%%%%%%%%%%%

\subsection{Flag manifolds}\label{SS:flag}
If a cohomology class evaluates nontrivially on a sphere then it is
not the sum of the products of classes of lower degree.  This simple
observation allows to prove the algebraic independence of the
characteristic classes for flag manifolds.

\begin{lemma}\label{L:flag}
Let $G$ be a compact and connected semisimple group.  Let $\omega $ be
a generic homogeneous symplectic form on the flag manifold $G/T$. Then
for a rationally nontrivial homotopy class $f\colon S^{2k} \to BG$
the induced Hamiltonian bundle has a nontrivial class $\mu_k$.
\end{lemma}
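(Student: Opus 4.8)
The plan is to compute the fibre integral $\mu_k$ for the bundle pulled back along $f\colon S^{2k}\to BG$ and reduce it to a linear-algebra statement about weights. First I would fix a maximal torus $T\subset G$, a dominant regular element $\xi\in\F t^\vee$, so that $M_\xi = G/T$ with the symplectic form $\omega_\xi$ determined by $\xi$. The universal Hamiltonian fibration $G/T\to BT\to BG$ has coupling class equal to the equivariant extension $\Om_\xi\in H^2_G(G/T)\cong H^2(BT)$ of $\omega_\xi$; concretely, under the identification $H^*(BT)\cong\mathrm{Sym}(\F t^\vee)$, the class $\Om_\xi$ is (a constant multiple of) $\xi$ itself, viewed as a degree-$2$ element. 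The fibre $G/T$ has complex dimension $n = |\Delta^+|$ (the number of positive roots), so $\mu_k = \pi_!(\Om_\xi^{n+k})$, and the fibre integration $\pi_!\colon H^*(BT)\to H^*(BG)$ is, up to sign, division by the product of positive roots $\prod_{\al\in\Delta^+}\al$ followed by the averaging/antisymmetrization over the Weyl group $W$ (this is the standard description of $\pi_!$ for $G/T$-bundles, essentially the Weyl character / Harish-Chandra formula at the level of cohomology). Hence $\mu_k(\xi)$ is, up to a nonzero constant, the alternating sum $\sum_{w\in W}\sgn(w)\, w(\xi^{n+k})\big/\prod_{\al\in\Delta^+}\al \in H^{2k}(BG)$, a genuine polynomial of degree $k$ in the $W$-invariants.

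The second step is to pass from ``$\mu_k\neq 0$ in $H^*(BG)$'' to ``$\mu_k$ evaluates nontrivially on the rationally nontrivial class $f\colon S^{2k}\to BG$.'' Since $\pi_{2k}(BG)\otimes\B Q\neq 0$ for $k\in\C K$, the rational Hurewicz map shows $H_{2k}(S^{2k})\to H_{2k}(BG)$ carries the fundamental class to a nonzero primitive element, and evaluating $\mu_k$ on $f_*[S^{2k}]$ picks out the coefficient of the corresponding degree-$2k$ primitive generator of $H^*(BG)$ in $\mu_k$. By the observation opening Section~\ref{SS:flag}, this is exactly what is needed: if that coefficient is nonzero then $\mu_k$ evaluated on the sphere is nonzero, hence $\mu_k$ is not a sum of products of lower-degree classes. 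So the lemma reduces to: for generic $\xi$, the polynomial $\mu_k(\xi)\in H^{2k}(BG)$ has nonzero primitive part, equivalently is nonzero modulo decomposables in $\mathrm{Sym}(\F t^\vee)^W$.

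The third and decisive step is a genericity argument in $\xi$. The map $\xi\mapsto \mu_k(\xi)$ is polynomial in $\xi$ (each coordinate of the coefficient vector of $\mu_k(\xi)$ with respect to a basis of the degree-$2k$ part of $H^*(BG)$ is a polynomial in the coordinates of $\xi$), so the locus where $\mu_k(\xi)$ lies in the span of decomposables is Zariski closed; it suffices to exhibit a \emph{single} $\xi_0$ where it is not, and then intersect the resulting open sets over the finitely many $k\in\C K$. For a single witness I would take $\xi_0$ far inside the positive Weyl chamber, or better, use a limiting/leading-term computation: as $\xi$ is scaled, $\mu_k(\xi)$ is homogeneous of degree $k$ in $\xi$, and one can compute its image in $\mathrm{Sym}(\F t^\vee)^W$ explicitly via the antisymmetrization formula above, recognizing it essentially as a Schur-type polynomial in the coordinates of $\xi$; such polynomials are manifestly non-decomposable (indecomposability of the power sums / the standard generators of $\mathrm{Sym}(\F t^\vee)^W$ in the relevant degrees is classical, cf.\ the structure of $H^*(BG)$ as a polynomial ring on generators in degrees $2\C K$). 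I expect this last point — writing the antisymmetrized expression explicitly enough to see that its primitive part is nonzero, uniformly over $k\in\C K$ — to be the main obstacle; everything else is functoriality of fibre integration and an elementary Hurewicz argument. The saving grace is that we only need one good $\xi$, so a favorable choice (e.g.\ $\xi$ such that the $\mu_k(\xi)$ are, up to scalars, the standard polynomial generators, or a generic large multiple of $\rho$) should suffice, after which Zariski-openness does the rest.
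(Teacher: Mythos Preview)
Your reduction is sound as far as it goes: identifying the universal coupling class with $\xi\in\mathrm{Sym}(\F t^\vee)$, expressing $\mu_k(\xi)$ via the antisymmetrization/divided-difference description of $\pi_!$, and noting that Zariski-openness reduces everything to exhibiting a single good $\xi_0$ are all correct. But the argument has a real gap at exactly the spot you flag as ``the main obstacle'': you never actually produce a $\xi_0$ for which the primitive part of $\mu_k(\xi_0)$ is nonzero. Calling the antisymmetrized expression ``manifestly non-decomposable'' or asserting that a large multiple of $\rho$ ``should suffice'' is not a proof; for general semisimple $G$ and arbitrary $k\in\C K$ there is no evident reason the degree-$k$ invariant you obtain cannot lie in the subring generated by lower-degree invariants. (There is also a minor slip: ``nonzero primitive part'' is not quite the right target. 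When $\dim\pi_{2k}(BG)\otimes\B Q>1$, as for $G=SO(4n)$ at $k=2n$, you need the component of $\mu_k$ dual to the \emph{given} $f_*[S^{2k}]$ to be nonzero, not merely some primitive component.)

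The paper's proof sidesteps all of this by a short topological argument that supplies precisely your missing step without any Weyl-group computation. It pulls back to the total space $E$ over $S^{2k}$ and observes: since $H^*(BT)$ is generated in degree two and $\pi^*\sigma$ factors through $BT$, the ring $H^*(E)$ is itself generated in degree two; since $E$ is closed and oriented, the symmetric multilinear map $a_1\otimes\cdots\otimes a_{n+k}\mapsto a_1\cdots a_{n+k}$ on $H^2(E)$ is nontrivial; and by polarization the polynomial map $a\mapsto a^{n+k}$ is therefore nontrivial. Hence a Zariski-open set of coupling classes $\Om\in H^2(E)\cong H^2(G/T)$ satisfies $\Om^{n+k}\neq 0$, giving $\langle\mu_k(E),[S^{2k}]\rangle\neq 0$ directly. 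The polarization trick is the idea you are missing, and it replaces the invariant-theory witness you could not construct.
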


\begin{proof}
Consider the following pull-back diagram
$$
\xymatrix
{
G/T \ar[r]^= \ar[d]              & G/T \ar[d] \\
E   \ar[r]^{\hat f} \ar[d]^{\pi} & BT \ar[d]^{p} \\
S^{2k} \ar[r]^f     & BG
}
$$ 
Let $\sigma \in H^{2k}(S^{2k})$ be a generator and let 
$\sigma =f^*(\al)$ for some $\al\in H^{2k}(BG)$. Notice that this
implies that $k>1$.  Since the cohomology of $BT$ is generated by
classes of degree two the pull back 
$$
\pi^*(\si) = \pi^*(f^*(\al))=\hat f^*(p^*(\al))
$$ 
is a sum of products of classes of degree two.  This implies that the
cohomology ring $H^*(E)$ is generated by degree two classes.
Moreover, the inclusion of the fibre induces an isomorphism
$H^2(E)\cong H^2(G/T).$

We claim that there exists a Zariski open subset
of $H^2(E)$ consisting of classes whose highest power
is nonzero. Since taking the highest, i.e. $(n+k)$-th
power defined an algebraic map $H^2(E)\to H^{2(n+k)}(E)=\B R$
it is enough to find just one class with nontrivial highest
power. Here $\dim G/T = 2n$. Observe that the symmetric map 
$$
H^2(E)^{\otimes(n+k)}\ni a_1\otimes \cdots \otimes a_{n+k}
\mapsto a_1\cdot\ldots\cdot a_{k+n}\in H^{n+k}(E)
$$ 
is nontrivial as $H^*(E)$ is
generated in dimension 2 and $E$ is closed and oriented. 
Since any multilinear symmetric map is determined
(via the polarisation formula) by a polynomial map, the map
$$
H^2(E)\ni a\mapsto a^{n+k}\in H^{n+k}(E)
$$ 
is nontrivial.

Let $\omega\in \Omega^2(G/T)$ be a generic $G$-invariant symplectic
form. Then the associated coupling class $\Omega \in H^2(E)$ 
is such that $\Omega^{n+k}\neq 0$ and hence
$$
\left <\mu_k(E),\left[S^{2k}\right]\right> = 
\left <\pi_!(\Omega^{n+k}),\left[S^{2k}\right]\right> = 
\left <\Omega^{n+k},[E]\right> \neq 0. 
$$

\end{proof}

\begin{corollary}\label{C:flag}
Let $\C K=\{k\in \B N\,|\, \pi_{2k}(BG)\otimes \B Q \neq 0\}$.
For a generic homogeneous symplectic form on a flag manifold
$G/T$ the classes $\mu_k\in H^{2k}(\BH(G/T))$ 
are algebraically independent for $k\in \C K$. Moreover, these
classes cannot be generated by classes of smaller degrees.
\end{corollary}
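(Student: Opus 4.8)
The plan is to deduce the corollary from Lemma~\ref{L:flag} by pushing everything down to the classifying space $BG$, where the cohomology ring is polynomial. For a generic homogeneous symplectic form $\omega$ on $G/T$ the manifold $(G/T,\omega)$ is a coadjoint orbit on which $G$ acts in a Hamiltonian way (this is already implicit in Lemma~\ref{L:flag}), so the action yields a classifying map $BG\to\BH(G/T)$ and a ring homomorphism $\rho\colon H^*(\BH(G/T))\to H^*(BG)$. Put $\bar\mu_k:=\rho(\mu_k)\in H^{2k}(BG)$. Any polynomial relation among the $\mu_k$, and any expression of some $\mu_k$ as a polynomial in lower-degree classes of $H^*(\BH(G/T))$, maps under $\rho$ to an identity of the same shape among the $\bar\mu_k$; hence it suffices to prove both assertions of the corollary for the classes $\bar\mu_k\in H^*(BG)$.

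The key point is that $\bar\mu_k$ is indecomposable in $H^*(BG)$ for every $k\in\C K$. Since $\pi_{2k}(BG)\otimes\B Q\neq 0$, there is a rationally nontrivial map $f\colon S^{2k}\to BG$. By naturality of $\mu_k$, the pullback $f^*(\bar\mu_k)$ is the $\mu_k$-class of the Hamiltonian $G/T$-bundle over $S^{2k}$ obtained by pulling back the universal bundle $BT\to BG$ along $f$, which is exactly the bundle considered in Lemma~\ref{L:flag}; hence $f^*(\bar\mu_k)\neq 0$. On the other hand a decomposable class of degree $2k$ is a sum of products of at least two classes of positive degree strictly less than $2k$, and every class of degree in the range $(0,2k)$ restricts to zero on $S^{2k}$, so every decomposable class restricts to zero there. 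Therefore $\bar\mu_k$ is not decomposable, i.e.\ its image in the degree-$2k$ part of the module of indecomposables $Q:=H^+(BG)/\bigl(H^+(BG)\bigr)^2$ is nonzero.

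Now I would use that for $G$ compact, connected and semisimple, $H^*(BG;\B R)$ is a polynomial algebra on homogeneous generators, and that, by the rational-homotopy dual description of $\C K$, generators occur exactly in the degrees $\{2k : k\in\C K\}$ (in particular $1\notin\C K$). The classes $\bar\mu_k$, $k\in\C K$, lie in pairwise distinct degrees and are nonzero in $Q$, so their images in $Q$ are linearly independent. By the graded Nakayama lemma one can then enlarge $\{\bar\mu_k\}_{k\in\C K}$ to a system of homogeneous polynomial generators of $H^*(BG)$: in each degree $2k$ with $k\in\C K$, replace one of the ambient generators on which $\bar\mu_k$ has nonvanishing image in $Q$ by $\bar\mu_k$ itself. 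A system of polynomial generators is algebraically independent, so the subfamily $\{\bar\mu_k\}_{k\in\C K}$ is algebraically independent, which gives the first claim. For the second, if some $\mu_k$ were a polynomial in classes of degree $<2k$, then $\bar\mu_k$ would be a sum of products of at least two positive-degree classes, hence decomposable, contradicting the previous paragraph.

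Granting Lemma~\ref{L:flag}, nothing here is analytic or geometric; the only step needing care is the third paragraph, where the geometric input ``each $\bar\mu_k$ is a nonzero indecomposable in a polynomial ring and the $\bar\mu_k$ sit in pairwise distinct degrees'' has to be turned into honest algebraic independence by a graded commutative-algebra argument.
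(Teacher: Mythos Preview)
Your argument is correct and follows essentially the same route as the paper: pull back to $H^*(BG)$, use Lemma~\ref{L:flag} to see that each $\bar\mu_k$ survives on a sphere and is therefore indecomposable, and then invoke the polynomial structure of $H^*(BG)$ to get algebraic independence. Your treatment of the last step (via indecomposables and extending to a generating set) is more explicit than the paper's one-line ``follows from the fact that $H^*(BG)$ is a free polynomial algebra,'' but the content is the same. One small point you gloss over and the paper makes explicit: Lemma~\ref{L:flag} gives, for each fixed $k$, a Zariski open set of symplectic forms for which $\mu_k$ is nonzero on $S^{2k}$; to get a single generic $\omega$ working for all $k\in\C K$ simultaneously you need to intersect these finitely many open dense sets.
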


\begin{proof}
Let $\xi \in \mathfrak g ^{\vee}$ be an element such that its
coadjoint orbit $M_{\xi}$ is diffeomorphic to a flag manifold $G/T.$
It follows from Lemma \ref{L:flag} that
for each $k\in \C K$ the class $\mu_k(E)\in H^*(S^{2k})$ is nonzero for
$\xi $ in an open and dense subset of $\mathfrak g^{\vee}$.  Taking
the intersection of these subsets for all $k\in \C K$ we obtain an
open and dense subset for which the classes are nontrivial for
Hamiltonian fibrations over spheres.
Since these classes don't vanish on spheres they cannot be generated
by classes of smaller degrees. The algebraic independence follows from
the fact that $H^*(BG)$ is a free polynomial algebra.
\end{proof}

\subsection{The general case}\label{SS:general}
The cohomology ring $H^*(BG)$ is isomorphic to the ring of the
invariants $S(\mathfrak g^{\vee})^G$ of the Lie algebra of $G$ and
to the ring of polynomials $S(\mathfrak t^{\vee})^W$ on the Lie
algebra of the maximal torus $T\subset G$ invariant under the
action of the Weyl group. Let $M_{\xi}$ be the coadjoint orbit
of $\xi\in \mathfrak g^{\vee}$. Then every class $\mu_k$ for $M_{\xi}$ 
uniquely defines an invariant polynomial
$p_{\xi,k}\in S(\mathfrak g)^G\cong S(\mathfrak t)^W$.

The function $H:M_\xi \to \mathbb R$ given by
$$
H(\operatorname{Ad}_g^{\vee}(\xi))=
\left<X,\operatorname{Ad}^{\vee}_g(\xi)\right>
$$ 
is the Hamiltonian function for the action on the coadjoint orbit
$M_\xi$ generated by $X\in \mathfrak g.$ The following lemma
follows from K\k edra-McDuff \cite[Lemma 3.6 and Lemma
3.9]{MR2115670}.

\begin{lemma}\label{L:polynomial}
Let $X\in \mathfrak g$.
The following formula holds true.
$$
p_{\xi,k}(X):= (-1)^k{{n+k} \choose k}\cdot 
\int_{G}\left<X,\operatorname{Ad}^{\vee}_g(\xi)\right>^k \operatorname{vol}_G
$$
\qed
\end{lemma}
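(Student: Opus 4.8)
The plan is to identify the characteristic class $\mu_k$ of the universal Hamiltonian fibration $M_\xi \to BG_\xi \to BG$ with an explicit invariant polynomial on $\F g$, and then compute that polynomial well enough to see it is nonzero for generic $\xi$. First I would recall from Guillemin-Lerman-Sternberg that, since $M_\xi$ is a coadjoint orbit, the $G$-action on it is Hamiltonian and the coupling class of the universal bundle is represented in the Cartan model by the equivariant extension of the symplectic form whose moment map is the inclusion $M_\xi \hookrightarrow \F g^\vee$ (composed with the identification $\F g \cong \F g^\vee$ via an invariant metric); this is precisely the normalisation built into the function $H$ displayed above. Fibre-integrating the $(n+k)$-th power of this equivariant form over $M_\xi$ produces, by the localisation-free Cartan-model computation carried out in K\k edra-McDuff, an element of $H^{2k}(BG) \cong S(\F g)^G$, and evaluating it on $X \in \F g$ amounts to integrating $\langle X, \mathrm{Ad}^\vee_g(\xi)\rangle^{n+k}$ over $M_\xi$ against the Liouville measure — equivalently, after pushing the Liouville measure forward along the orbit map $G \to M_\xi$ and using that it agrees (up to the universal constant coming from the Duistermaat-Heckman/Harish-Chandra normalisation) with Haar measure on $G$, against $\mathrm{vol}_G$. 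The binomial coefficient $\binom{n+k}{k}$ and sign $(-1)^k$ are exactly the combinatorial factors produced by expanding $(\omega + \langle \mu, \cdot\rangle)^{n+k}$ and keeping the top-degree-in-base part.

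Concretely, the steps would be: (i) write the equivariant coupling class $\Omega_G = \omega - H$ in the Cartan model $(\Omega^*(M_\xi) \otimes S(\F g^\vee))^G$, using the moment-map normalisation that makes $\pi_!(\Omega^{n+1}) = 0$; (ii) invoke Lemma 3.6 of K\k edra-McDuff to identify $\mu_k = \pi_!(\Omega^{n+k})$ with the fibre integral of $\Omega_G^{n+k}$, landing in $S^k(\F g^\vee)^G \cong H^{2k}(BG)$; (iii) expand $\Omega_G^{n+k} = \sum_j \binom{n+k}{j}\omega^{n-j}(-H)^{k+j}$ and observe that only the $j=n$ term survives fibre integration over the $2n$-dimensional fibre, giving $\binom{n+k}{k}(-1)^k \int_{M_\xi} H(\cdot)^k\,\omega^n/n!$, which is the invariant function $X \mapsto \binom{n+k}{k}(-1)^k \int_{M_\xi} \langle X, m\rangle^k\, d\mathrm{Liouville}(m)$; (iv) apply Lemma 3.9 of K\k edra-McDuff (or a direct Duistermaat-Heckman argument) to replace the Liouville integral over $M_\xi$ by the Haar integral $\int_G \langle X, \mathrm{Ad}^\vee_g \xi\rangle^k\,\mathrm{vol}_G$, the point being that pushing Liouville measure forward along $g \mapsto \mathrm{Ad}^\vee_g\xi$ yields Haar measure because the isotropy-group normalisation cancels against the volume of the orbit through $\xi$ in the right units. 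Assembling (iii) and (iv) gives the claimed formula for $p_{\xi,k}(X)$.

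The main obstacle I expect is step (iv): getting the \emph{constant} exactly right so that the Liouville integral over $M_\xi$ and the Haar integral over $G$ really do agree on the nose (not merely up to an unspecified $\xi$-dependent factor), since any such factor would spoil the later claim that $p_{\xi,k}$ is a genuine polynomial in $\xi$ with controllable leading behaviour. This requires being careful about (a) the normalisation of $\mathrm{vol}_G$ versus the Liouville form $\omega^n/n!$, (b) the relation between the moment map image and $\xi$ via the chosen invariant inner product, and (c) the fact that for generic $\xi$ the orbit is the full flag manifold so the isotropy is exactly $T$ and the pushforward computation is uniform. Fortunately all of this is exactly the content of Lemmas 3.6 and 3.9 of \cite{MR2115670}, which were proved in this generality, so the proof reduces to quoting them with the normalisation conventions matched; no further computation beyond bookkeeping the binomial factor is needed.
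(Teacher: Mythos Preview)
Your approach is exactly the paper's: the lemma is stated there with a bare \texttt{\textbackslash qed} and the remark that it follows from Lemmas~3.6 and~3.9 of K\k edra--McDuff \cite{MR2115670}, which is precisely what you invoke after unpacking their content. (The indexing in your binomial expansion in step~(iii) is garbled---the surviving term should read $\binom{n+k}{n}\omega^{n}(-H)^{k}$, not $\binom{n+k}{n}\omega^{0}(-H)^{k+n}$---but your stated conclusion is correct, so this is cosmetic.)
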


\begin{corollary}\label{C:even}
Let $\Mo$ be a nontrivial coadjoint orbit of a semisimple Lie group
$G.$ Then the class $\mu_{2k}\in H^{4k}(B\Ham\Mo)$ is nontrivial for
every positive integer $k\in \B N.$ \qed
\end{corollary}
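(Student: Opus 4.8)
The plan is to deduce Corollary~\ref{C:even} directly from the explicit formula in Lemma~\ref{L:polynomial} by showing that the invariant polynomial $p_{\xi,2k}$ cannot be the zero polynomial. Concretely, $\mu_{2k}$ is trivial in $H^{4k}(B\Ham\Mo)$ precisely when the associated invariant polynomial $p_{\xi,2k}\in S(\F g)^G$ vanishes identically, so it suffices to exhibit a single $X\in\F g$ with $p_{\xi,2k}(X)\neq 0$.

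First I would fix a nonzero $\xi\in\F g^\vee$ whose coadjoint orbit is the given nontrivial $M_\xi$, and write, using Lemma~\ref{L:polynomial},
$$
p_{\xi,2k}(X) = (-1)^{2k}\binom{n+2k}{2k}\int_G \left<X,\operatorname{Ad}^\vee_g(\xi)\right>^{2k}\,\vol_G.
$$
The binomial coefficient is a positive rational number, so the sign is irrelevant. The key observation is that the integrand $\left<X,\operatorname{Ad}^\vee_g(\xi)\right>^{2k}$ is an \emph{even} power, hence everywhere $\geq 0$, and the integral is taken against the (positive) Haar measure $\vol_G$. Therefore $p_{\xi,2k}(X)\geq 0$ for every $X$, and it vanishes only if $\left<X,\operatorname{Ad}^\vee_g(\xi)\right>=0$ for $\vol_G$-almost every $g$, hence (by continuity) for \emph{all} $g\in G$.

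Next I would rule out this degenerate possibility for a well-chosen $X$. If $\left<X,\operatorname{Ad}^\vee_g(\xi)\right>=0$ for all $g$, then $X$ is orthogonal (under the pairing) to the entire linear span of the coadjoint orbit of $\xi$; equivalently, the orbit $M_\xi$ lies in the annihilator hyperplane (or subspace) $X^\perp\subset\F g^\vee$. Since $G$ is semisimple there is no invariant line in $\F g^\vee$, so the span of a nonzero coadjoint orbit is a nonzero $G$-invariant subspace; choosing $X$ not orthogonal to this span—which is possible precisely because the span is nonzero—gives $\left<X,\operatorname{Ad}^\vee_g(\xi)\right>\not\equiv 0$, and then the integral of its square is strictly positive. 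Hence $p_{\xi,2k}(X)>0$ for this $X$, so $p_{\xi,2k}\neq 0$ and $\mu_{2k}$ is nontrivial.

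The only real subtlety is the passage from ``the integral vanishes'' to ``the integrand vanishes identically'': this is where one uses that $g\mapsto\left<X,\operatorname{Ad}^\vee_g(\xi)\right>$ is continuous and $\vol_G$ has full support, together with nonnegativity of an even power—so a vanishing integral forces the continuous nonnegative integrand to be identically zero. Everything else is immediate from Lemma~\ref{L:polynomial} and semisimplicity. Note that this argument genuinely needs the exponent to be even, which is why the odd classes $\mu_{2k+1}$ require the separate, more delicate generic analysis carried out in the rest of Section~\ref{S:proof}.
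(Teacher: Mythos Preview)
Your argument is correct and is exactly the reasoning the paper leaves implicit behind the bare \qed: the even exponent makes the integrand in Lemma~\ref{L:polynomial} nonnegative, and nontriviality of $\xi$ gives a point where it is strictly positive, so $p_{\xi,2k}\neq 0$ and hence $\mu_{2k}\neq 0$. Two minor quibbles: the phrase ``precisely when'' overstates matters (you only need, and only have, the implication $p_{\xi,2k}\neq 0\Rightarrow \mu_{2k}\neq 0$ via the pullback $H^*(B\Ham)\to H^*(BG)$), and the detour through semisimplicity and invariant subspaces is unnecessary---since $\xi\neq 0$, the duality pairing alone furnishes an $X$ with $\langle X,\xi\rangle\neq 0$, making the integrand positive at $g=e$.
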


\subsection{Proof of Theorem \ref{T:main}}
It follows from Corollary \ref{C:flag} that the relevant classes $\mu_k$
are algebraically independent in $H^*(BG)$ for a generic $\xi$ defining a
flag manifold. Since the algebraic independence is an open condition it follows
that the polynomials $p_{\xi,k}$ are algebraically independent for a
generic $\xi \in \mathfrak g^{\vee}$ and $k\in \C K.$
\qed

\begin{remark}
Observe that $p_{\xi,k}(X)$ defines a bi-invariant polynomial
of degree $k$ on the tensor product $\mathfrak g\otimes \mathfrak g^{\vee}$.
The algebra of $G$-bi-invariant polynomials on $\mathfrak g\otimes \mathfrak g^{\vee}$
is isomorphic to the algebra of polynomials on
$\mathfrak t\otimes \mathfrak t^{\vee}$ bi-invariant for the Weyl group of $G$.
\end{remark}

There are two interesting Zariski open subsets of the dual Lie algebra
$\mathfrak t^{\vee}$ of the maximal torus $T\subset G$. One consists of those
elements giving flag manifolds as coadjoint orbits. And this is just
the union of the interiors of the Weyl chambers. The other consists
of elements such that the characteristic classes associated with their
coadjoint orbits are algebraically independent.  In the next section we
discuss examples showing that this two sets do not contain each
other. In particular, the algebraic independence holds not only
for flag manifolds.

%%%%%%%%%%%%%%%%%%%%%%%%%%%%%%%%%%%%%%%%%%%%%%
\section{Examples}\label{S:examples}
%%%%%%%%%%%%%%%%%%%%%%%%%%%%%%%%%%%%%%%%%%%%%

\subsection{Coadjoint orbits of $\operatorname{SU}(n)$}
\label{SS:su}
As we mentioned in the introduction Reznikov proved
that the classes $\mu_k$ are algebraically independent
in $H^*(B\Ham(\cp^{n-1}))$ for $k=2,\ldots,n$ 
(see K\k edra--McDuff \cite[Proposition 1.7]{MR2115670} 
for an alternative proof). Hence it
follows that these classes are also algebraically
independent for any coadjoint orbit of $SU(n)$ which
is close to $\cp^{n-1}$.

\subsection{The failure of the algebraic independence}
\label{SS:fail}

Checking directly whether a class $\mu_k$ is a polynomial in lower
degree classes seems to be complicated in general.  However, one can
make an interesting claim if $\dim H^{2k}(BG)=1.$ The latter condition
is always true for a simple compact Lie group $G$ and $k=2$, and for a
single odd number $k$ if $G$ has one of the groups $SU(n)$,
$SO(4k+2)$ or $E_6$ as a factor.  We shall discuss concrete examples
in the next sections.

\begin{proposition}\label{P:fail}
Let $G$ be a compact Lie group and let $m\in \B N$ be 
a number for which $\pi_{2m}(BG)\otimes \B Q = H^{2m}(BG;\B Q)=\B Q.$ 
Let $u \in S(\mathfrak g^{\vee})^G$ be a nontrivial invariant polynomial
of degree $m$.
The class $\mu_{m}\in H^{2m}(BG)$ is trivial for
the coadjoint orbit $M_\xi$ if and only if
$u(\xi)= 0.$ 
\end{proposition}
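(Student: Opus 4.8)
The plan is to use the explicit formula from Lemma~\ref{L:polynomial}. Since $\dim H^{2m}(BG;\B Q)=1$, the space of invariant polynomials $S^m(\F g^\vee)^G$ is one-dimensional, so the polynomial $p_{\xi,m}$ must be a scalar multiple of the chosen generator $u$. The whole proof then reduces to identifying when that scalar vanishes.

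**Key steps.**

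First I would observe that, because $H^{2m}(BG;\B Q)$ is one-dimensional and spanned by $u$, there is a constant $c_\xi \in \B R$ with $p_{\xi,m} = c_\xi \cdot u$ as elements of $S^m(\F g)^G \cong S^m(\F g^\vee)^G$ (identifying $\F g$ with $\F g^\vee$ via an invariant inner product, or just noting both live in the one-dimensional degree-$m$ piece). The class $\mu_m$ for $M_\xi$ is trivial precisely when $p_{\xi,m}=0$, i.e.\ precisely when $c_\xi = 0$. So the task is to show $c_\xi = 0 \iff u(\xi) = 0$.

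Second, I would make the dependence on $\xi$ explicit. From Lemma~\ref{L:polynomial},
$$
p_{\xi,m}(X) = (-1)^m \binom{n+m}{m} \int_G \langle X, \operatorname{Ad}^\vee_g(\xi)\rangle^m \,\vol_G.
$$
For fixed $\xi$ this is a $G$-invariant degree-$m$ polynomial in $X$; but I would instead regard the right-hand side as a polynomial that is \emph{separately} $G$-invariant and homogeneous of degree $m$ in each of $X$ and $\xi$ (this is the bi-invariance remarked on after the proof of Theorem~\ref{T:main}). Write $P(X,\xi)$ for this bi-invariant pairing. Then $c_\xi$ is itself, up to the nonzero combinatorial constant, a $G$-invariant homogeneous polynomial in $\xi$ of degree $m$ --- namely the coefficient expressing $P(\cdot,\xi)$ in terms of $u$. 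By uniqueness (one-dimensionality again) this invariant must itself be a scalar multiple of $u(\xi)$: so $c_\xi = \lambda\, u(\xi)$ for some universal constant $\lambda$ independent of $\xi$.

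Third, I would show $\lambda \neq 0$, which closes the argument: $\mu_m$ trivial $\iff p_{\xi,m}=0 \iff c_\xi = 0 \iff u(\xi)=0$. To see $\lambda \neq 0$, it suffices to exhibit a single $\xi$ with $u(\xi)\neq 0$ for which $p_{\xi,m}\neq 0$; but such a $\xi$ can be chosen generic, giving a flag manifold $G/T$, and then Corollary~\ref{C:flag} guarantees $\mu_m$ (and hence $p_{\xi,m}$) is nontrivial. Since $u$ is a nonzero polynomial, $u$ is nonzero on a Zariski-dense set, which meets the Zariski-dense set of flag-manifold parameters; any $\xi$ in the intersection witnesses $\lambda \neq 0$.

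**The main obstacle.**

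The delicate point is the middle step: justifying that the scalar $c_\xi$, defined a priori only as "the coefficient of $u$ in $p_{\xi,m}$", depends on $\xi$ as a genuine $G$-invariant polynomial of degree exactly $m$, so that one-dimensionality forces $c_\xi = \lambda\,u(\xi)$. This requires unwinding the polarization: $P(X,\xi)$ is a bilinear-type object on $S^m(\F g)\otimes S^m(\F g^\vee)$, and one must check that picking out the $u$-component in the first variable yields something polynomial (not merely rational or piecewise) in $\xi$ and $G$-invariant. Concretely one can dualize: $P$ determines an element of $\big(S^m(\F g^\vee)\big)^G \otimes \big(S^m(\F g)^\vee\big)^G$, both factors one-dimensional, so $P = \lambda\, u \otimes u^*$ for the dual generator $u^*$, and evaluating the first factor at $X$ and the second at $\xi$ gives $P(X,\xi) = \lambda\, u(X)\, \langle u^*,\xi^{\otimes m}\rangle$, with $\langle u^*, \xi^{\otimes m}\rangle$ proportional to $u(\xi)$. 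Making this identification clean --- and verifying the nonvanishing $\lambda\neq 0$ via the flag-manifold case --- is the crux; everything else is bookkeeping with the formula of Lemma~\ref{L:polynomial}.
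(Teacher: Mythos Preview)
Your proposal is correct and follows essentially the same route as the paper: use bi-invariance of $p_{\xi,m}(X)$ in $(X,\xi)$ together with one-dimensionality of the degree-$m$ invariants to factor it as $u(\xi)\cdot v(X)$, then invoke genericity (the flag case) to see the nonvanishing of the other factor. The paper states the factorization in one line and appeals to ``nontrivial for a generic $\xi$'' without naming Corollary~\ref{C:flag}; your version spells out more carefully why the coefficient $c_\xi$ is itself an invariant polynomial in $\xi$, which is exactly the point the paper compresses into the phrase ``bi-invariant on $\mathfrak g\otimes\mathfrak g^\vee$''.
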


\begin{proof}
It follows from the hypothesis and the isomorphism
$H^*(BG) = S(\mathfrak g^{\vee})^G$ that the polynomial
$u$ is unique up to a constant. 
Since the polynomial $p_{\xi,m}(X)$ is bi-invariant on the tensor
product $\mathfrak {g}\otimes \mathfrak {g}^{\vee}$ there exists a
degree $m$ invariant polynomial $v$ on $\mathfrak {g}$ such that
$$
p_{\xi,m}(X) = u(\xi)\cdot v(X). 
$$
Because $p_{\xi,m}(-)$ is nontrivial for a generic $\xi$ we get that
the polynomial $v$ is nonzero.
Hence $p_{\xi,m}(X)$ is trivial if and only if $u(\xi)=~0.$
\end{proof}

\begin{corollary}\label{C:fail}
If $m$ in the above proposition is odd then there exists
a coadjoint orbit $M_{\xi}$ for which the class $\mu_{2m}$
is trivial in $H^{2m}(BG).$ \qed
\end{corollary}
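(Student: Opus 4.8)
The plan is to reduce the statement to a one–variable sign–change argument via Proposition~\ref{P:fail}. Under the hypotheses of that proposition the degree $m$ component of $H^{2m}(BG)\cong S(\mathfrak g^{\vee})^G$ is one–dimensional, spanned by a nonzero invariant polynomial $u$, and the class $\mu_m$ is trivial for the coadjoint orbit $M_\xi$ exactly when $u(\xi)=0$. So it suffices to produce a single $\xi_0\in\mathfrak g^{\vee}$, not a fixed point of the coadjoint action, with $u(\xi_0)=0$.

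Since $m$ is odd, $u$ is a nonzero homogeneous polynomial of odd degree on the vector space $\mathfrak g^{\vee}$, so $u(-\xi)=-u(\xi)$ for every $\xi$; choosing any $\xi$ with $u(\xi)>0$ we get $u(-\xi)<0$. As $\dim\mathfrak g\geq 2$ (indeed $\geq 3$ for a semisimple $G$), the complement $\mathfrak g^{\vee}\setminus\{0\}$ is path–connected, so a path from $\xi$ to $-\xi$ avoiding the origin meets the zero locus of $u$ at some point $\xi_0\neq 0$. For semisimple $G$ every nonzero element of $\mathfrak g^{\vee}$ has a positive–dimensional coadjoint orbit, so $M_{\xi_0}$ is a nontrivial coadjoint orbit, and Proposition~\ref{P:fail} shows that $\mu_m\in H^{2m}(BG)$ is trivial for $M_{\xi_0}$. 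If one prefers not to assume $G$ semisimple, the same argument applies after passing to a simple factor carrying the class $u$ and observing that the zero locus of $u$, being a hypersurface, is not contained in the annihilator of $[\mathfrak g,\mathfrak g]$.

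There is no genuine obstacle here; the one point that deserves attention is why the parity of $m$ matters. For even $m$ the generator $u$ can be sign–definite — the Killing form in degree two is the prototype — in which case its only zero is the origin, so \emph{no} nontrivial coadjoint orbit annihilates $\mu_m$, which is precisely the content of Corollary~\ref{C:even}. The oddness of $m$ is exactly what forces $u$ to change sign, and once that is known the existence of a nonzero zero, hence of the required orbit, is immediate.
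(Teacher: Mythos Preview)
Your argument is correct and follows the same route the paper intends: the corollary is stated with a bare \qed, meaning it is meant to be an immediate consequence of Proposition~\ref{P:fail}, and you have simply made explicit the one missing observation---that a nonzero homogeneous polynomial of odd degree on a real vector space of dimension at least two has a nontrivial zero. Your sign-change/connectedness argument is the natural way to see this, and your remark on why the parity hypothesis is essential (contrasting with Corollary~\ref{C:even}) is a nice addition.
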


\subsection{Coadjoint orbits of $\operatorname{SU}(n)$ again}
\label{SS:su_again}

\begin{proposition}\label{P:grassmannian}
The $\mu_3$ class in $H^6(BSU(n))$ is trivial for the
adjoint orbit of the diagonal matrix
$\operatorname{diag}[X_1,\ldots,X_n]\in \mathfrak{su}(n)$ if and only if
$\sum X_i^3=0$. In particular, the class
$\mu_3$ is trivial for 
the grassmannian $\operatorname{G}(m,2m)$ of $m$-planes
in $\B C^{2m}$ and certain flag manifolds.
\end{proposition}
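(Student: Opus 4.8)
The plan is to apply Proposition~\ref{P:fail} directly. Since $SU(n)$ is simple and $\pi_6(BSU(n))\otimes\B Q = H^6(BSU(n);\B Q)=\B Q$ for $n\geq 3$, there is a one-dimensional space of invariant degree-three polynomials on $\F{su}(n)$, and Proposition~\ref{P:fail} tells us that $\mu_3$ vanishes for the adjoint orbit $M_\xi$ precisely when $u(\xi)=0$ for any chosen nonzero such invariant polynomial $u$. So the first step is to exhibit a convenient $u$: the natural candidate is $u(X)=\operatorname{tr}(X^3)$, which on the diagonal matrix $X=\operatorname{diag}[X_1,\ldots,X_n]$ evaluates to $\sum_i X_i^3$. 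I must check this polynomial is not identically zero on $\F{su}(n)$; this is standard, since $\operatorname{tr}(X^3)$ is the primitive degree-three generator of $H^*(BSU(n))$ and is nonzero on, e.g., a generic traceless diagonal matrix. Since every element of $\F{su}(n)$ is conjugate to a diagonal one, it suffices to evaluate $u$ on diagonal matrices, and then $\mu_3$ is trivial for the orbit of $\operatorname{diag}[X_1,\ldots,X_n]$ if and only if $\sum_i X_i^3 = 0$, which is the first assertion.

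For the ``in particular'' statement about the grassmannian $\operatorname{G}(m,2m)$, the plan is to realize this grassmannian as an adjoint orbit in $\F{su}(2m)$ and read off the defining diagonal matrix. The grassmannian of $m$-planes in $\B C^{2m}$ is the adjoint orbit of a diagonal matrix with two distinct eigenvalues, each of multiplicity $m$; after shifting to make it traceless we may take $X=\operatorname{diag}[\underbrace{a,\ldots,a}_{m},\underbrace{-a,\ldots,-a}_{m}]$ for some $a\neq 0$. Then $\sum_i X_i^3 = m a^3 + m(-a)^3 = 0$, so by the first part $\mu_3$ is trivial for this orbit. The mild subtlety to flag here is that the class $\mu_k$ depends on the symplectic form, i.e.\ on the actual orbit and not just the diffeomorphism type; but since $\operatorname{G}(m,2m)$ has a unique invariant symplectic form up to scale (its second cohomology is one-dimensional), there is no ambiguity, and the vanishing holds for \emph{the} coupling class of that orbit.

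Finally, for ``certain flag manifolds'' the idea is simply to observe that the equation $\sum_i X_i^3 = 0$, together with $\sum_i X_i = 0$, has solutions with all $X_i$ distinct, and such a point is regular, so its orbit is the full flag manifold $SU(n)/T$. For instance one can solve the two equations with $n$ sufficiently large (or even arrange a rational solution with distinct entries for small $n$, e.g.\ $n=3$ as promised in the introduction), giving a homogeneous symplectic form on $SU(n)/T$ for which $\mu_3$ vanishes in $H^6(BSU(n))$. I expect the only real point requiring care is the verification that $\operatorname{tr}(X^3)$ is a genuine (nonzero) generator of the degree-three invariants rather than being, say, a multiple of a product of lower-degree invariants---but there are no invariants in odd degree below three, so $\operatorname{tr}(X^3)$ is automatically primitive and the argument goes through with no obstruction.
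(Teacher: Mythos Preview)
Your proof is correct and follows essentially the same route as the paper's: apply Proposition~\ref{P:fail} with the unique (up to constant) degree-three invariant polynomial $u(X)=\operatorname{tr}(X^3)=\sum X_i^3$, then specialize to the diagonal matrix with $m$ eigenvalues $a$ and $m$ eigenvalues $-a$ for the grassmannian, and to a regular zero of $u$ for the flag manifold case. The paper is slightly terser and omits the sanity checks you include (nontriviality of $u$, uniqueness of the invariant symplectic form on $\operatorname{G}(m,2m)$), but the argument is the same.
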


\begin{proof}
The algebra $S(\mathfrak{su}(n))^{SU(n)}$ of invariant polynomials 
is generated by the polynomials of the form
$$
\mathfrak{su}(n)\ni X \mapsto \sum X_i^k,
$$
where $X_i\in \B C$ are the eigenvalues of the matrix $X$.
Hence any invariant polynomial of degree three is up to a
constant equal to~$\sum X_i^3.$

It follows from Proposition \ref{P:fail} that the orbit of a vector
for which the above polynomial is trivial has vanishing
class $\mu_3$. This include grassmannians $\operatorname{G}(m,2m)$
since it is (up to a scalar) the adjoint orbit of the diagonal matrix
$X=\operatorname{diag}[i,\ldots,i,-i,\ldots,-i].$ Taking a generic
zero of the above polynomial we obtain flag manifolds for which
the class $\mu_3$ is trivial.
\end{proof}

\subsection{Coadjoint orbits of $\operatorname{SO}(2n)$}
\label{SS:twistor}

Let $SO(2n)\to SO(2n+1) \to S^{2n}$ be the bundle of the
orthonormal frames with respect to the standard round 
metric. The associated bundle
$$
SO(2n)/U(n) \to E \to S^{2n}
$$
admits a fibrewise symplectic form due to Reznikov \cite{MR1225431}
(see also K\k edra-Tralle-Woike \cite{KTW} for a more general statement).
This symplectic form restricts to the symplectic form on the fibres
and it represents the coupling class. Since it is nondegenerate
the fibre integral of its top power is nonzero in $H^{2n}(S^{2n}).$
Thus the class $\mu_{n}(E)$ is a nonzero multiple of the
Euler class of the tangent bundle of $S^{2n}.$

It follows that the class $\mu_{n}$ is also nontrivial for an orbit
in a neighbourhood of $SO(2n)/U(n).$ In particular, for every 
subgroup $H=\prod U(n_i),$ with $\sum n_i=n,$ there is an 
$SO(2n)$--invariant form on $SO(2n)/H$ such that the class 
$\mu_n$ is nonzero.

\begin{example}\label{E:so(8)/u(4)}
Let $M=SO(8)/U(4).$ The above argument show that the class 
$\mu_4\in H^8(B\Ham(M))$ is indecomposable. Moreover, the class
$\mu_2$ is nontrivial, due to Corollary \ref{C:even}. Hence these
classes are algebraically independent and their pull backs generate
$H^*(BSO(8)).$
\end{example}

\subsection{Coadjoint orbits of $\operatorname{SO}(4n+2)$}
\label{SS:so(4n+2)}

Let $\mathfrak t \subset \mathfrak {so}(4n+2)$ be
the Lie algebra of the maximal torus given by the skew
symmetric matrices with $2\times 2$-blocks on the diagonal.
Hence an element $\xi \in \mathfrak t$ may be represented by
an $(2n+1)$-tuple $[t_1,\ldots,t_{2n+1}]$ of real numbers.

We have $\dim \pi_{4n+2}(BSO(4n+2))\otimes \B Q=1$ and the
corresponding characteristic class is defined by the Pfaffian.  When
restricted to the Lie algebra $\mathfrak t$, the Pfaffian is, up to a
constant, equal to the product of coordinates.  This proves the
following.

\begin{proposition}
Let 
$\xi=[t_1,t_2,\ldots,t_{2n+1}]\in \mathfrak t \subset \mathfrak {so}(4n+2).$
The class $\mu_{2n+1}\in H^{4n+2}(BSO(4n+2))$ is trivial for the
coadjoint orbit $M_{\xi}$ if and only if $\prod t_i=0.$
\qed
\end{proposition}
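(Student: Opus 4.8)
The plan is to invoke Proposition~\ref{P:fail} with $G=SO(4n+2)$ and $m=2n+1$, so the whole task reduces to pinning down which invariant polynomial of degree $m$ plays the role of $u$ and evaluating it on a torus element. First I would recall that $SO(4n+2)$ is simple and that $H^{4n+2}(BSO(4n+2);\B Q)$ has a one-dimensional space of new generators in that degree, namely the Euler class $e$, whose associated invariant polynomial on $\mathfrak{so}(4n+2)$ is the Pfaffian; thus the hypotheses of Proposition~\ref{P:fail} are satisfied with $u=\operatorname{Pf}$.

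Next I would compute the restriction of the Pfaffian to the chosen Cartan subalgebra $\mathfrak t$. Writing an element $\xi\in\mathfrak t$ as a block-diagonal skew-symmetric matrix with $2\times 2$ blocks $\begin{pmatrix}0&t_i\\-t_i&0\end{pmatrix}$ for $i=1,\dots,2n+1$, the Pfaffian of such a block matrix is, up to an overall sign, the product $\prod_{i=1}^{2n+1} t_i$ — this is the standard evaluation of the Pfaffian on a $2\times 2$ block-diagonal matrix, and it is visibly a nonzero homogeneous polynomial of degree $2n+1$ on $\mathfrak t$. Hence $u(\xi)=c\prod_{i=1}^{2n+1}t_i$ for a nonzero constant $c$.

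Finally I would combine the two observations: by Proposition~\ref{P:fail}, $\mu_{2n+1}\in H^{4n+2}(BSO(4n+2))$ is trivial for the coadjoint orbit $M_\xi$ if and only if $u(\xi)=0$, and by the Pfaffian computation this is equivalent to $\prod_{i=1}^{2n+1} t_i=0$. This is exactly the claimed statement. The only genuinely substantive point — and the one I would be most careful about — is the identification of $u$ with the Pfaffian rather than with some mixture involving Pontryagin classes; here it matters that $4n+2$ is not divisible by $4$, so that the only primitive generator in degree $4n+2$ is the Euler class and the ambiguity that forces the exclusion of $SO(4k)$ in Corollary~\ref{C:main} does not arise. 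Everything else is a short, well-known linear-algebra computation, so I do not expect any real obstacle.
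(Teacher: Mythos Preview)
Your proposal is correct and follows the same line as the paper: the paper simply observes that $\dim \pi_{4n+2}(BSO(4n+2))\otimes \B Q=1$, that the corresponding invariant polynomial is the Pfaffian, and that its restriction to $\mathfrak t$ is, up to a constant, the product of the coordinates, then applies Proposition~\ref{P:fail}. Your write-up is in fact a bit more careful than the paper's, since you spell out why the parity $4n+2\equiv 2\pmod 4$ forces $H^{4n+2}(BSO(4n+2);\B Q)$ to be one-dimensional (no products of Pontryagin classes can land there), which is the full hypothesis needed in Proposition~\ref{P:fail}.
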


%%%%%%%%%%%%%%%%%%%%%%%%%%%%%%%%%%%%%%%%%%%%%%%%%%%%%%%%%%%%%
\section{An application to lattices in semisimple groups}
\label{S:lattices}
%%%%%%%%%%%%%%%%%%%%%%%%%%%%%%%%%%%%%%%%%%%%%%%%%%%%%%%%%%%%%

Let $G$ be a semisimple Lie group, $K\subset G$ a maximal compact
subgroup and $\Gamma \subset G$ an irreducible cocompact 
lattice trivially intersecting $K$. 
Let $M\subset G^c$ be a maximal compact subgroup of the
complexification of $G$. Let $H\subset K$ be the isotropy
subgroup of $\xi \in \mathfrak k^{\vee}.$ We have a Hamiltonian
bundle
$$
K/H \to \Gamma\backslash G/H \to \Gamma \backslash G/K = B\Gamma
$$
classified by a
map $\Gamma \backslash G/K \to BK.$ 
It was observed by Okun \cite{MR1875614} that this classifying map
lifts to a map $\Gamma \backslash G/K \to M/K$ after passing
to a sublattice of finite index if necessary. Let us call this
lift the Okun map. The homomorphism 
$H^*(M/K)\to H^*(\Gamma)$ induced by the Okun map is called
the Matsushima homomorphism and it is known to be injective.
It is also surjective in degrees smaller than  the rank of $G$.
This result was proved by Matsushima in \cite{MR0141138}.

We obtain the following diagram of Hamiltonian fibrations
$$
\xymatrix
{
K/H \ar[r] \ar[d] & K/H \ar[r] \ar[d] & K/H \ar[d]\\
\Gamma\backslash G/H \ar[r] \ar[d] & M/H \ar[r] \ar[d] & BH \ar[d]\\
B\Gamma \ar[r]^b & M/K \ar[r]^f & BK
}
$$

\begin{theorem}
Let $\xi \in \mathfrak k^{\vee}$ be a generic element such that its
isotropy subgroup $H\subset K$ is a maximal torus. If
$\pi_{2k}(M/K)\otimes \mathbb Q \to \pi_{2k}(BK)\otimes \mathbb Q$
is nontrivial then the characteristic class
$$
\mu_k(\Gamma\backslash G/H) \in H^{2k}(B\Gamma)
$$
is nonzero.
\end{theorem}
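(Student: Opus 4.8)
The plan is to chase the characteristic class $\mu_k$ through the diagram of Hamiltonian fibrations and reduce the nonvanishing over $B\Gamma$ to nonvanishing over $BK$, where Corollary \ref{C:flag} (with its genericity clause) applies. First I would observe that $\mu_k$ is a characteristic class of Hamiltonian bundles with fibre $K/H$, hence it is pulled back from $H^{2k}(B\Ham(K/H))$ and, via the classifying map, from $H^{2k}(BK)$; concretely, writing the bottom row of the diagram as $B\Gamma \xrightarrow{b} M/K \xrightarrow{f} BK$, the bundle $\Gamma\backslash G/H \to B\Gamma$ is the pullback along $f\circ b$ of the universal bundle $K/H \to BH \to BK$. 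By naturality of fibre integration (the coupling class is natural, as recalled in the introduction), $\mu_k(\Gamma\backslash G/H) = b^*f^*\mu_k$, where $\mu_k \in H^{2k}(BK)$ is the class computed by the invariant polynomial $p_{\xi,k}$ of Lemma \ref{L:polynomial}.

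Next I would use the genericity hypothesis on $\xi$: since $H$ is a maximal torus, $K/H$ is a flag manifold, and Corollary \ref{C:flag} guarantees that for generic homogeneous symplectic form — i.e. for $\xi$ in a nonempty Zariski open subset of $\F k^\vee$ — the class $\mu_k \in H^{2k}(BK)$ is detected on $S^{2k}$ for every $k \in \C K$, in particular is nonzero whenever $\pi_{2k}(BK)\otimes\B Q \neq 0$. So it suffices to show that $b^*f^*$ does not kill this class. The composite $f^*$ followed by $b^*$ is, up to the identification $H^*(M/K)\to H^*(\Gamma)$, precisely the Matsushima homomorphism composed with $f^*$. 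By Matsushima's theorem (quoted in the excerpt), $b^* \colon H^*(M/K) \to H^*(B\Gamma)$ is injective. Hence $\mu_k(\Gamma\backslash G/H) = b^*(f^*\mu_k)$ is nonzero as soon as $f^*\mu_k \in H^{2k}(M/K)$ is nonzero.

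It therefore remains to rule out that $\mu_k$ dies under $f^* \colon H^{2k}(BK) \to H^{2k}(M/K)$. This is exactly where the hypothesis $\pi_{2k}(M/K)\otimes\B Q \to \pi_{2k}(BK)\otimes\B Q$ nontrivial enters. Dualizing, the transgression/Hurewicz considerations identify the primitive part of $H^{2k}(BK;\B Q)$ with $\Hom(\pi_{2k}(BK)\otimes\B Q, \B Q)$, and the map on homotopy being nonzero forces $f^*$ to be nonzero on the corresponding primitive class. I would argue this by choosing the generator $\alpha \in H^{2k}(BG)$ that transgresses appropriately (cf. the proof of Lemma \ref{L:flag}, where $\sigma = f^*(\alpha)$ forces $k>1$ and pins down which generator to use): if a nonzero element of $\pi_{2k}(M/K)$ maps to a nonzero element of $\pi_{2k}(BK)$, then evaluating $f^*\alpha$ on the spherical homology class coming from $M/K$ is nonzero, so $f^*\alpha \neq 0$, and since $\mu_k$ is a nonzero multiple of $\alpha$ in the one-relevant-generator situations — or more generally a nonzero polynomial whose relevant primitive component survives — we get $f^*\mu_k \neq 0$.

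The main obstacle is this last step: controlling precisely which generator of $H^{2k}(BK)$ the class $\mu_k$ represents and confirming that the nontriviality of the homotopy map $\pi_{2k}(M/K)\otimes\B Q\to\pi_{2k}(BK)\otimes\B Q$ detects exactly that generator under $f^*$. When $\dim H^{2k}(BK;\B Q)=1$ this is immediate, but for general $K$ one must work with the primitive (spherical) quotient of $H^*(BK;\B Q)$ and check that $\mu_k$, being detected on a $2k$-sphere by Lemma \ref{L:flag}, has nonzero image in that quotient, so that the dual homotopy statement applies. Everything else — naturality of $\mu_k$, Matsushima injectivity, the reduction along the diagram — is formal.
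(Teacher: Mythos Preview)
Your argument is correct and matches the paper's: reduce via naturality and Matsushima injectivity to showing $f^*\mu_k\neq 0$ in $H^{2k}(M/K)$, then use the homotopy hypothesis together with detection of $\mu_k$ on spheres. The ``obstacle'' you flag dissolves if you apply Lemma~\ref{L:flag} directly to the specific spherical class rather than first invoking Corollary~\ref{C:flag}: pick any $\sigma\in\pi_{2k}(M/K)\otimes\B Q$ with $f_*\sigma\neq 0$, and Lemma~\ref{L:flag} (for generic $\xi$) gives $\langle\mu_k,f_*\sigma\rangle\neq 0$, hence $\langle f^*\mu_k,\sigma\rangle\neq 0$, so $f^*\mu_k\neq 0$ with no need to identify which generator $\mu_k$ represents or to analyse primitive quotients --- this is exactly how the paper proceeds.
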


\begin{proof}
Let $\sigma \in \pi_{2k}(M/K)\otimes \mathbb Q$ be an element such
that $f_*(\sigma)\neq 0.$ According to Lemma \ref{L:flag} we have that
the class $\mu_k(BH)\in H^{2k}(BK)$ evaluates non trivially on
$f_*(\sigma)$ hence the corresponding characteristic class
$\mu_k(M/H)$ class is nonzero in $H^{2k}(M/K)$. Since the Matsushima
homomorphism is injective we obtain the statement.
\end{proof}

\begin{theorem}
Let $K$ be a simple compact group different from $SO(4n).$
Suppose it is of maximal rank in $G.$
Let $\xi \in \mathfrak k^{\vee}$ be a generic element. Denote its
isotropy subgroup by $H\subset K$.
Then the image of the Matsushima
homomorphism $H^*(M/K)\to H^*(B\Gamma)$ is generated
by the characteristic classes $\mu_k(\Gamma \backslash G/H)$.
\end{theorem}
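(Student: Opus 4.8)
The goal is to identify the image of the Matsushima homomorphism $H^*(M/K)\to H^*(B\Gamma)$ with the subalgebra generated by the classes $\mu_k(\Gamma\backslash G/H)$, under the hypothesis that $K$ is simple, not of type $SO(4n)$, of maximal rank in $G$, and that $H$ is the isotropy of a generic $\xi\in\mathfrak k^\vee$ (so $H$ is a maximal torus and $K/H$ is a flag manifold). I would first reduce the statement to a computation over $BK$, using the bottom row $B\Gamma\xrightarrow{b}M/K\xrightarrow{f}BK$ of the displayed diagram of Hamiltonian fibrations. Since the Matsushima homomorphism is injective and the classes $\mu_k$ are natural under pullback of Hamiltonian fibrations, it suffices to show: (i) the image of $f^*\colon H^*(BK)\to H^*(M/K)$ is exactly the subalgebra generated by the classes $\mu_k(M/H)$; and (ii) the Matsushima homomorphism $b^*$ carries this image onto the image generated by the $\mu_k(\Gamma\backslash G/H)$, i.e. that $b^*$ does not introduce relations among these particular classes in low degrees.

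For step (i), the key input is Corollary \ref{C:main}: because $K$ is simple and not $SO(4n)$, the homomorphism $H^*(B\Ham(K/H))\to H^*(BK)$ induced by the $K$-action on the generic coadjoint orbit $K/H$ is surjective with image generated by the $\mu_k$. Precomposing with the classifying map of the middle fibration $M/H\to M/K$, which factors through $B\Ham(K/H)$, the naturality of the coupling class gives $\mu_k(M/H)=f^*(\mu_k(BH))$, and the surjectivity in Corollary \ref{C:main} shows that as $k$ ranges over $\C K=\{k:\pi_{2k}(BK)\otimes\B Q\neq 0\}$ the classes $\mu_k(BH)$ generate $H^*(BK)$. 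Hence $f^*(H^*(BK))$ is generated by the $\mu_k(M/H)$.

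For step (ii), I would invoke Matsushima's theorem as quoted in the excerpt: the Matsushima homomorphism $H^*(M/K)\to H^*(B\Gamma)$ is injective, and surjective in degrees below the rank of $G$. Since $K$ has maximal rank in $G$, the rank of $G$ equals the rank of $K$, which (for a simple compact $K$) is at least as large as the smallest degree $2k$ with $k\in\C K$ except in low-dimensional cases; more to the point, the generators $\mu_k$ sit in degrees $2k$ that are controlled by the exponents of $K$, and one checks these all lie in the range where the Matsushima map is an isomorphism — or, more robustly, one simply uses injectivity together with the fact that the subalgebra $f^*H^*(BK)\subset H^*(M/K)$ maps isomorphically onto its image, so the classes $b^*\mu_k(M/H)=\mu_k(\Gamma\backslash G/H)$ generate a subalgebra abstractly isomorphic to $f^*H^*(BK)$, which by step (i) is all of $f^*H^*(BK)$. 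Combining, the image of the Matsushima homomorphism is generated by the $\mu_k(\Gamma\backslash G/H)$.

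\textbf{The main obstacle.} The delicate point is step (ii): showing that the image of the full Matsushima homomorphism $H^*(M/K)\to H^*(B\Gamma)$ coincides with the image of the sub-piece $f^*H^*(BK)$, rather than being strictly larger. This requires knowing that $H^*(M/K)$ itself is generated — in the relevant range — by $f^*H^*(BK)$, i.e. that the fibration $M/K\to BK$ (with fibre a compact symmetric-space-type object) contributes no extra cohomology below the rank of $G$. I expect this to follow from the structure of the pair $(M,K)$ and a Leray--Hirsch / spectral-sequence argument for $M/K\to BK$, using that $K$ has maximal rank in $G$ so that $M/K$ has only even-dimensional cohomology concentrated appropriately; but pinning down the precise degree range where this holds, and reconciling it with Matsushima's surjectivity bound, is where the real work lies.
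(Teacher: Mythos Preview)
Your overall structure matches the paper's proof exactly: use Corollary~\ref{C:main} to get that the $\mu_k$ generate $H^*(BK)$, then push forward along $f^*$ and $b^*$ using naturality of the coupling class. The paper's proof is three lines long and follows precisely this outline.

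The piece you flag as the ``main obstacle'' is in fact the only substantive step, and you have correctly identified both what is needed and why the equal-rank hypothesis is relevant --- but you have overcomplicated it. What is required is simply that $f^*\colon H^*(BK)\to H^*(M/K)$ be surjective in \emph{all} degrees, not merely below the rank. This is a classical theorem of Borel: for compact connected $K\subset M$ of equal rank, the fibre inclusion $M/K\hookrightarrow BK$ in the fibration $M/K\to BK\to BM$ induces a surjection on rational cohomology, with kernel the ideal generated by $H^{>0}(BM)$. (Equal rank forces $\chi(M/K)>0$, hence $H^*(M/K)$ is concentrated in even degrees, and the Serre spectral sequence collapses.) Once you have this, step~(ii) is immediate: $H^*(M/K)$ is \emph{itself} generated by the $\mu_k(M/H)$, so its image under $b^*$ is generated by the $\mu_k(\Gamma\backslash G/H)$. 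Only the injectivity of the Matsushima homomorphism is needed, and even that is not strictly necessary for the statement as phrased.

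Your detour through Matsushima's surjectivity bound is a red herring and, as you half-noticed, does not work: the degrees $2k$ for $k\in\C K$ are governed by the exponents of $K$ and generally far exceed the rank (for $K=SU(n)$ the top one is $2n$ while the rank is $n-1$). Drop that paragraph entirely; the equal-rank surjectivity of $f^*$ is the whole story.
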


\begin{proof}
It follows from Corollary \ref{C:main} that the classes $\mu_k$
generate the cohomology ring $H^*(BK)$.  Since $K$ and $M$ have equal
rank the homomorphism $H^*(BK)\to H^*(M/K)$ is surjective hence the
ring $H^*(M/K)$ is also generated by the classes  $\mu_k$ and so is its
image in $H^*(\Gamma)$.
\end{proof}

%\nocite{*}
\bibliography{../../bib/bibliography}

\def\polhk#1{\setbox0=\hbox{#1}{\ooalign{\hidewidth
  \lower1.5ex\hbox{`}\hidewidth\crcr\unhbox0}}}
  \def\polhk#1{\setbox0=\hbox{#1}{\ooalign{\hidewidth
  \lower1.5ex\hbox{`}\hidewidth\crcr\unhbox0}}}
\begin{thebibliography}{1}

\bibitem{MR98d:58074}
Victor Guillemin, Eugene Lerman, and Shlomo Sternberg.
\newblock {\em Symplectic fibrations and multiplicity diagrams}.
\newblock Cambridge University Press, Cambridge, 1996.

\bibitem{JK}
Tadeusz Januszkiewicz and Jarek K{\k e}dra.
\newblock Characteristic classes of smooth fibrations.
\newblock {\em math.SG/0209288}.

\bibitem{MR2115670}
Jarek K{\k{e}}dra and Dusa McDuff.
\newblock Homotopy properties of {H}amiltonian group actions.
\newblock {\em Geom. Topol.}, 9:121--162 (electronic), 2005.

\bibitem{KTW}
Jarek K{\k e}dra, Aleksy Tralle, and Artur Woike.
\newblock On nondegenerate coupling forms.
\newblock {\em {\tt http://arxiv.org/abs/1004.3699}}, 2010.

\bibitem{MR1941438}
Fran{\c{c}}ois Lalonde and Dusa McDuff.
\newblock Symplectic structures on fiber bundles.
\newblock {\em Topology}, 42(2):309--347, 2003.

\bibitem{MR0141138}
Yoz{\^o} Matsushima.
\newblock On {B}etti numbers of compact, locally sysmmetric {R}iemannian
  manifolds.
\newblock {\em Osaka Math. J.}, 14:1--20, 1962.

\bibitem{MR1875614}
Boris Okun.
\newblock Nonzero degree tangential maps between dual symmetric spaces.
\newblock {\em Algebr. Geom. Topol.}, 1:709--718 (electronic), 2001.

\bibitem{MR1225431}
Alexander~G. Reznikov.
\newblock Symplectic twistor spaces.
\newblock {\em Ann. Global Anal. Geom.}, 11(2):109--118, 1993.

\bibitem{MR2000f:53116}
Alexander~G. Reznikov.
\newblock Characteristic classes in symplectic topology.
\newblock {\em Selecta Math. (N.S.)}, 3(4):601--642, 1997.
\newblock Appendix D by Ludmil Katzarkov.

\end{thebibliography}
\bibliographystyle{plain}

\end{document}